\numberwithin{equation}{section}
\newcommand{\id}{\mathord{\operatorname{id}}}
\newcommand{\Z}{\mathbf{Z}}
\newcommand{\N}{\mathbf{N}}
\newcommand{\rE}{\operatorname{E}}
\newcommand{\rV}{\operatorname{V}}
\newcommand{\Gr}{\mathcal{G}}
\newcommand{\EG}{\rE(\Gr)}
\newcommand{\VG}{\rV(\Gr)}
\theoremstyle{plain}
\newtheorem{theorem}{Theorem}[section]
\newtheorem*{theoA}{Theorem A}
\newtheorem{lemma}[theorem]{Lemma}
\theoremstyle{definition}
\newtheorem{definition}[theorem]{Definition}
\newtheorem{example}[theorem]{Example}
\newtheorem{remark}[theorem]{Remark}
\begin{document}

\author{Pierre Fima}
\address{Pierre Fima
\newline
Universit\'e Denis-Diderot - Paris 7, Institut Math\'ematiques de Jussieu, CNRS UMR 7586, Site Sophie Germain, 75013 Paris, France.}
\email{pfima@math.jussieu.fr}
 \thanks{The first author is partially supported by ANR Grants OSQPI and NEUMANN}
\author{Soyoung Moon}
\thanks{The second author is partially supported by the Conseil Regional de Bourgogne (Faber 2012-1-9201-247)}
\address{Soyoung Moon
\newline
Universit\'e de Bourgogne, Institut Math\'ematiques de Bourgogne, CNRS UMR 5584, B.P. 47870, 21078 Dijon Cedex, France.}
\email{soyoung.moon@u-bourgogne.fr}
\author{Yves Stalder}
\address{Yves Stalder
\newline
Clermont Universit\'e, Universit\'e Blaise Pascal, Laboratoire de Math\'ematiques, BP 10448,\linebreak[4] F-63000 Clermont-Ferrand, France.
\newline
CNRS UMR 6620, LM, F-63171 Aubi\`ere, France.}
\email{yves.stalder@math.univ-bpclermont.fr}

\title{Highly transitive actions of groups acting on trees}
\begin{abstract}

\noindent We show that a group acting on a non-trivial tree with finite edge stabilizers and icc vertex stabilizers admits a faithful and highly transitive action on an infinite countable set. This result is actually true for infinite vertex stabilizers and some more general, finite of infinite,  edge stabilizers  that we call highly core-free. We study the notion of highly core-free subgroups and give some examples. In the case of amalgamated free products over highly core-free subgroups and HNN extensions with highly core-free base groups we obtain a genericity result for faithful and highly transitive actions. In particular, we recover the result of D. Kitroser stating that the fundamental group of a closed, orientable surface of genus $g>1$ admits a faithful and highly transitive action.
\end{abstract}

\maketitle

\noindent An action of a countable group $\Gamma$ on an infinite countable set $X$ is called \textit{highly transitive} if it is $n$-transitive for all $n\geq 1$. It is easy to see that an action $\Gamma\curvearrowright X$ is highly transitive if and only if the image of $\Gamma$ in the Polish group $S(X)$ of bijections of $X$ is dense.

\vspace{0.2cm}

\noindent An obvious example of a highly transitive and faithful action is given by the action of the countable group of finitely supported permutations of $X$. This group is not finitely generated and far from being free (it is amenable).

\vspace{0.2cm}

\noindent The first explicit construction of a highly transitive and faithful action of the free group $\mathbf{F}_n$, for $2\leq n\leq\infty$, was published in \cite{McD77} by T.P. McDonough. Then, J. D. Dixon \cite{Di90} showed that most (in a topological sense) finitely generated highly transitive subgroups of $S(X)$ are free.

\vspace{0.2cm}

\noindent A.M.W. Glass and S.H. McCleary \cite{GM91} have constructed faithful and highly transitive action of a free product $\Gamma_1*\Gamma_2$ of non-trivial countable (or finite) groups with $\Gamma_2$ having an element of infinite order. They also observed that $\Z_2*\Z_2$ does not have a faithful and $2$-transitive action and they asked for which non-trivial groups $\Gamma_i$, $i=1,2$, does $\Gamma_1*\Gamma_2$ have a faithful and highly transitive action.

\vspace{0.2cm}

\noindent S.V. Gunhouse \cite{Gu92} completely answered the question by constructing, for any non-trivial countable (or finite) groups $\Gamma_i$, $i=1,2$, with one of the $\Gamma_i$ of size at least $3$, a faithful and highly transitive action of $\Gamma_1*\Gamma_2$. A similar result was obtained independently by K.K. Hickin \cite{Hi92}. Recently, the last two authors \cite{MoSt} proved a genericity result for faithful and highly transitive actions of free products.

\vspace{0.2cm}

\noindent Using the techniques of Hickin, Gunhouse also characterized in his PhD (unpublished result) the non-trivial amalgamated free products with amalgamation over an Artinian group\footnote{that is, any decreasing chain of its distinct subgroups terminates after a finite number; e.g. finite groups.}, that admit a faithful and highly transitive action. He proved that if $\Gamma=\Gamma_1\underset{\Sigma}{*}\Gamma_2$ and $\Sigma$ is an Artinian group, properly included in $\Gamma_1$ and $\Gamma_2$, then $\Gamma$ admits a faithful and highly transitive action if and only if the only subgroup of $\Sigma$ which is normal in both $\Gamma_1$, $\Gamma_2$ is the trivial group. By the results of \cite{Co09}, this last condition is equivalent, when $\Sigma$ is finite and has index at least three in one of the $\Gamma_i$, to say that $\Gamma$ is icc.

\vspace{0.2cm}

\noindent Other examples of groups admitting a faithful and highly transitive action were given recently: surface groups (D. Kitroser \cite{Ki12}), ${\rm Out}(\mathbf{F}_n)$, for $n\geq 4$ (S. Garion and Y. Glasner \cite{GG11}), and non-elementary hyperbolic groups with trivial finite radical (V. V. Chainikov, \cite[Section IV.4]{ChaPhD}).

\vspace{0.2cm}

\noindent In this paper we are interested in groups acting without inversion on a tree (e.g. amalgamated free product and HNN-extensions) and admitting a faithful and highly transitive action.

\vspace{0.2cm}

\noindent Observe that, by \cite[Proposition 1.4]{MoSt}, if $\Gamma$ admits a highly transitive and faithful action then $\Gamma$ is icc.

\vspace{0.2cm}

\noindent We call a tree \textit{non-trivial} if it has at least two edges, $e$ and its inverse edge $\overline{e}$. For general groups acting on trees we obtain the following result.

\begin{theoA}
Let $\Gamma$ be a group acting without inversion on a non-trivial tree. If the vertex stabilizers are icc and the edge stabilizers are finite then $\Gamma$ admits a faithful and highly transitive action.
\end{theoA}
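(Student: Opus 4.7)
The plan is to reduce Theorem A to the statements for the two elementary graphs of groups, namely amalgamated free products $A *_C B$ and HNN extensions $\HNN(A,C,\theta)$ with $A,B$ icc and $C$ finite, using Bass-Serre theory. I would first argue that one may pass to the action of $\Gamma$ on its Bass-Serre tree, so that $\Gamma$ is the fundamental group of a graph of groups with icc vertex groups and finite edge groups. The general case then follows by iterating the decomposition: collapse one edge at a time of the underlying graph and inductively reduce to the basic cases. The key invariant to maintain through this iteration is the icc property of the intermediate vertex groups, and this preservation is presumably what the paper's notion of highly core-free subgroups is designed to capture.

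For the amalgamated free product base case, I would invoke Gunhouse's theorem recalled in the excerpt: if $C$ is finite and properly contained in the icc groups $A$ and $B$, then $A *_C B$ admits a faithful highly transitive action if and only if no non-trivial subgroup of $C$ is normal in both $A$ and $B$. Since $A$ and $B$ are icc and infinite, $C$ has infinite (hence $\geq 3$) index in both, and $\Gamma$ itself is icc: any element fixing a vertex has infinite conjugacy class by the icc hypothesis on that vertex stabilizer, while any hyperbolic element has infinite conjugacy class thanks to translations along its axis. By the result of \cite{Co09} recalled in the excerpt, the condition on normal subgroups of $C$ is then equivalent to $\Gamma$ being icc, which we have. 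For the HNN extension base case, I would develop the analogous statement: $\HNN(A,C,\theta)$ with $A$ icc and $C$ finite admits a faithful highly transitive action, via a permutation-extension construction in the spirit of Hickin, Gunhouse and Moon-Stalder, in which the permutation assigned to the stable letter is built by a back-and-forth argument that simultaneously secures faithfulness on all reduced words and all finite levels of transitivity.

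The hard part will be the HNN step together with the iterative assembly for general graphs of groups. The HNN case lacks the clean bilateral structure of an amalgamated product, and faithfulness requires controlling infinitely many reduced words involving the stable letter. The iteration demands a property on the edge subgroups that is strictly stronger than ordinary core-freeness and survives decomposition along a spanning tree of the quotient graph of groups; this is precisely where the notion of highly core-free should enter, as the right invariant closed under amalgamated product and HNN extension. I expect the bridge statement, namely that every finite subgroup of an icc group is highly core-free, to follow from the infinite conjugacy class property by a diagonal avoidance argument, and to be the single technical lemma needed to deduce Theorem A from the stronger theorem promised in the abstract.
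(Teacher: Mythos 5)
Your reduction skeleton is broadly the right one, and you correctly guessed the bridge statement: the paper's Lemma \ref{hcf} shows (via B.~H.~Neumann's lemma on groups covered by finitely many cosets) that a finite subgroup which is relatively icc --- in particular any finite subgroup of an icc group --- is highly core-free, and Theorem A is then a corollary of Theorem \ref{Main}. But there is a genuine gap at the heart of your plan: the two base cases are not proved. For the HNN extension you only announce a ``permutation-extension/back-and-forth'' construction; this is precisely the technical content of the paper (Lemma \ref{HNNHT} and Theorem \ref{ThmHNN}), where one works in the closed set $Z=\{w\in S(X):\theta(\sigma)=w\sigma w^{-1}\ \forall\sigma\in\Sigma\}$ and must show that the bending permutation $w$ can be modified off a finite set, \emph{while staying $\Sigma$-equivariant}, so as to realize any prescribed $n$-tuple transition and keep faithfulness; making such finite equivariant modifications is exactly where high core-freeness of both $\Sigma$ and $\theta(\Sigma)$ (equivalently, high faithfulness of the coset actions) is used, and no back-and-forth sketch without this ingredient constitutes a proof. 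The HNN case cannot be avoided: whenever removing the chosen edge leaves the quotient graph connected (e.g.\ a loop), the Bass--Serre decomposition is an HNN extension. For the amalgam case you outsource the work to Gunhouse's criterion, which the paper explicitly describes as an unpublished thesis result; the paper instead proves its own genericity statement (Theorem \ref{ThmFree}) by the same Baire-category scheme, so your route replaces the needed argument by a citation that is not available in the literature. (Your detour through Cornulier and icc-ness of $\Gamma$ is also unnecessary: a nontrivial subgroup of the finite edge group normal in both factors would be a finite normal subgroup of an icc vertex group, which is impossible.)

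Two further points of divergence from the paper are worth noting. First, your description of the role of ``highly core-free'' is off: it is not an invariant designed to propagate icc-ness of intermediate vertex groups, but a strengthening of core-freeness of the \emph{edge} subgroup (equivalent to the action on cosets $H/\Sigma$ being highly faithful), and its sole function is to make the density argument for the sets $O_{\bar{x},\bar{y}}$ work in Lemmas \ref{HNNHT} and \ref{FPHT}. Second, the paper does not iterate an edge-by-edge collapse: it removes a \emph{single} edge $e_0$ and writes $\Gamma$ as an HNN extension or an amalgam whose factors are fundamental groups of the complementary subgraphs; these factors need only be infinite, and only the hcf property of $\Sigma_{e_0}$ inside its vertex groups is needed. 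This one-step reduction sidesteps both the termination problem your induction would face when the quotient graph is infinite and the need to verify icc-ness (or any base-case hypothesis beyond infiniteness) for the intermediate factor groups.
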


\vspace{0.2cm}

\noindent We obtain actually a more general statement (Theorem \ref{Main}) including some infinite edge stabilizers that we call highly core-free subgroups (Definition \ref{defhcf}).

\vspace{0.2cm}

\noindent To prove Theorem A we use standard arguments from Bass-Serre theory to reduce the case to either an amalgamated free product or an HNN-extension. For this two cases we prove a genericity result for faithful and highly transitive actions  (Theorem \ref{ThmHNN} and Theorem \ref{ThmFree}). We recover, as a particular case of our result on amalgamated free product, the result of D. Kitroser about surface groups (Example \ref{Kitroser}).

\vspace{0.2cm}

\noindent The paper is organized as follows. Section 1 is a preliminary section in which we introduce and study the notion of highly core-free subgroups that we use in the paper. In section $2$ we study the case of HNN-extensions and the case of amalgamated free products is treated in section $3$. We prove Theorem A in section $4$. Finally, we give some examples and links with former results in section $5$.

\section{Highly core-free subgroups}

\vspace{0.2cm}

\noindent Let $\Sigma$ be a subgroup of a group $H$. For a subset $S\subset H$, the \textit{normal core of $\Sigma$ with respect to $S$} is defined by Core$_{S}(\Sigma)=\cap_{h\in S}h^{-1}\Sigma h$. Recall that $\Sigma$ is called \textit{a core-free subgroup} if Core$_H(\Sigma)=\{1\}$. We need a stronger condition than core-freeness namely, we ask that, for every covering of $H$ with non-empty sets, there exists at least one set in the covering for which the associated normal core of $\Sigma$ is trivial. Asking for this property to hold also for coverings up to finitely many $\Sigma$-classes leads to the following definition.

\begin{definition}\label{defhcf} A subgroup $\Sigma< H$ is called \textit{highly core-free} if, for every finite subset $F\subset H$, for any $n\geq 1$, for any non-empty subsets $S_1,\ldots, S_n\subset H$ such that $H\setminus\Sigma F\subset\cup_{k=1}^n S_k$ there exists $1\leq k\leq n$ such that Core$_{S_k}(\Sigma)=\{1\}$.
\end{definition}

\vspace{0.2cm}

\noindent It is clear that a highly core-free subgroup is core-free. Also, if $H$ is finite then every non-trivial subgroup is not highly core-free (however there exists finite groups with non-trivial core-free subgroups, for example the permutation group $S(n)$ in $S(n+1)$ is core-free). More generally, a finite index subgroup is never highly core-free. Indeed, if $\Sigma$ is a non-trivial finite index subgroup of $H$ and $F$ is a finite subset such that $H=\Sigma F$, take $n=1$ and $S_1=\{1\}$ so that we have Core$_{S_1}(\Sigma)=\Sigma\neq\{1\}$. This shows that the highly core-free condition is interesting only for subgroups of infinite groups. Examples of highly core-free subgroups will be given in Example \ref{Exhcf}.

\vspace{0.2cm}

\noindent Recall that a subgroup $\Sigma< H$ is core-free if and only if the action $H\curvearrowright H/\Sigma$ on the left cosets is faithful (this argument also shows that a core-free subgroup of an infinite group has infinite index). The highly core-free condition will be equivalent to a stronger condition that faithfulness. We introduce this notion in the following definition.

\begin{definition}
An action $H\curvearrowright X$ is called \textit{highly faithful} if, for every finite subset $F\subset X$, for any $n\geq 1$, for any non-empty subsets $S_1,\ldots,S_n\subset X$ such that $X\setminus F\subset \cup_{k=1}^n S_k$ there exists $1\leq k\leq n$ satisfying the following property:
$$\text{if }h\in H\text{ is such that }hx=x\text{ for all }x\in S_k\text{ then }h=1.$$
\end{definition} 

\vspace{0.2cm}

\noindent Given a set $X$ and an integer $k\geq 2$, let $X^{(k)}$ be the \textit{complement of the large diagonal in} $X^k$:
$$X^{(k)}=\{(x_1,\ldots,x_k)\in X^k\,:\,x_i\neq x_j\,\,\text{for all}\,\,i\neq j\}.$$
We give some caracterisations of core-freeness in the next lemma.

\begin{lemma}
Let $\Sigma< H$ be a non-trivial subgroup of an infinite group $H$. The following are equivalent.
\begin{enumerate}
\item $\Sigma$ is highly core-free.
\item $\forall n\geq 1$, $\forall x_1,\ldots,x_n\in H\setminus\{1\}$, $\forall F\subset H$ finite subset, the set
$$H_{F,\bar{x}}=\{h\in H\,:\,hx_i\notin\Sigma F\,\forall i\,\,\text{and}\,\,hx_ih^{-1}\notin\Sigma\,\,\forall i\}\quad\text{is non-empty}.$$
\item $\forall k\geq 2$, $\forall \bar{x}=(x_1,\dots, x_k)\in H^{(k)}$, and for every finite subset $F\subset H$, the set
$$G_{F,\bar{x}}=\{h\in H\,:\,hx_i\notin \Sigma F\,\,\forall i \textrm{ and } hx_ix_j^{-1}h^{-1}\notin \Sigma\,\, \forall i\neq j\}\quad\text{is non-empty}.$$
\item For every free action $H\curvearrowright X$, $\forall k\geq 2$, $\forall \bar{x}=(x_1,\dots, x_k)\in X^{(k)}$ and all $F\subset X$ finite,
$$E_{F,\bar{x}}=\{h\in H\,:\,hx_i\notin \Sigma F\,\,\forall i \textrm{ and } \Sigma hx_i\cap\Sigma hx_j=\emptyset\,\, \forall i\neq j\}\quad\text{is non-empty}.$$
\item The action $H\curvearrowright H/\Sigma$ on left cosets (resp. on right cosets) is highly faithful.
\end{enumerate}
\end{lemma}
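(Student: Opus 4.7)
My strategy is to prove four equivalences in parallel: $(1)\Leftrightarrow(2)$, $(2)\Leftrightarrow(3)$, $(3)\Leftrightarrow(4)$, and $(1)\Leftrightarrow(5)$. The common translation mechanism is $hxh^{-1}\in\Sigma\Leftrightarrow x\in h^{-1}\Sigma h$, together with the observation that $\Sigma hx_i\cap\Sigma hx_j=\emptyset\Leftrightarrow hx_ix_j^{-1}h^{-1}\notin\Sigma$ for any free action, and the identity $\operatorname{Core}_{\Sigma S}(\Sigma)=\operatorname{Core}_S(\Sigma)$. I will also use tacitly that $(1)$ forces $[H:\Sigma]=\infty$: if $\Sigma F=H$ for some finite $F$, then the singleton cover $S_1=\{1\}$ has $\operatorname{Core}_{\{1\}}(\Sigma)=\Sigma\neq\{1\}$, contradicting $(1)$.

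For $(1)\Leftrightarrow(2)$ I argue by contrapositive. If $(2)$ fails with non-trivial $x_1,\ldots,x_n$ and finite $F$, then every $h\in H$ satisfies $h\in\Sigma Fx_i^{-1}$ or $hx_ih^{-1}\in\Sigma$ for some $i$; setting $T=\bigcup_iFx_i^{-1}$ and $S_i=\{h:hx_ih^{-1}\in\Sigma\}\setminus\Sigma T$ yields non-empty sets (infinite index of $\Sigma$ rules out total collapse) that cover $H\setminus\Sigma T$ with $x_i\in\operatorname{Core}_{S_i}(\Sigma)\setminus\{1\}$, contradicting $(1)$. Conversely, from $(1)$-failure with $F$, $\{S_k\}$, and non-trivial $y_k\in\operatorname{Core}_{S_k}(\Sigma)$, the tuple $\bar y=(y_k)_k$ with $F'=\bigcup_kFy_k$ makes $H_{F',\bar y}$ empty: any $h\in H\setminus\Sigma F$ lies in some $S_k$ and hence satisfies $hy_kh^{-1}\in\Sigma$, while any $h\in\Sigma F$ satisfies $hy_k\in\Sigma F'$ for every $k$.

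For $(2)\Leftrightarrow(3)$: $(3)\Rightarrow(2)$ follows by applying $(3)$ to $\bar x'=(1,x_1,\ldots,x_n)\in H^{(n+1)}$ (after de-duplication) and $F'=F$, reading the $i=0$ coordinate and the inverted conjugation conditions. For $(2)\Rightarrow(3)$, given $\bar x\in H^{(k)}$ and $F$, apply $(2)$ to the non-trivial list $\{x_ix_j^{-1}\}_{i\neq j}$ and to $F'=\bigcup_jFx_j^{-1}$: since $k\geq 2$, for each $i$ one may pick $j\neq i$ and use $F\subseteq F'x_j$ to deduce $hx_i\notin\Sigma F$ from $hx_ix_j^{-1}\notin\Sigma F'$. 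For $(3)\Leftrightarrow(4)$: $(4)\Rightarrow(3)$ is the specialization to the (free) left-regular action $H\curvearrowright H$. Conversely, given a free action $H\curvearrowright X$, I fix a base point $y_l$ in each orbit meeting $\{x_i\}\cup F$, write $x_i=g_iy_{l(i)}$ and $f=g_fy_{l(f)}$, and apply $(3)$ to the list $\bar z$ of distinct values of the $g_i$ (padded to ensure $|\bar z|\geq 2$) and $F''=\{g_f:f\in F\}$; freeness translates $hx_i\notin\Sigma F$ into $hg_i\notin\Sigma F''$, while $\Sigma hx_i\cap\Sigma hx_j\neq\emptyset$ forces $l(i)=l(j)$ and hence $g_i\neq g_j$ (else $x_i=x_j$), so that the $(3)$-conclusion $hg_ig_j^{-1}h^{-1}\notin\Sigma$ applies.

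Finally, $(1)\Leftrightarrow(5)$ uses the right-coset model $\Sigma\backslash H$, which is $H$-isomorphic to $H/\Sigma$ via $\Sigma g\mapsto g^{-1}\Sigma$, so the statement is insensitive to the choice of sides. Here $\operatorname{Stab}(\Sigma h)=h^{-1}\Sigma h$, and thus $\operatorname{Stab}(S')=\operatorname{Core}_{(\pi')^{-1}(S')}(\Sigma)$, where $\pi'\colon H\to\Sigma\backslash H$ is the quotient map. The two directions then consist in projecting or lifting a cover along $\pi'$ and applying the identity $\operatorname{Core}_{\Sigma S}(\Sigma)=\operatorname{Core}_S(\Sigma)$ to match the two formulations. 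The main technical obstacle is the orbit bookkeeping in $(3)\Rightarrow(4)$: one must carefully distinguish intra-orbit pairs (where the conjugation condition of $(3)$ carries the content) from inter-orbit pairs (where $\Sigma$-orbit disjointness is automatic), and handle the possible coincidences among the $g_i$'s so that a tuple of pairwise distinct elements of $H$ is actually fed into $(3)$.
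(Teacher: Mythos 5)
Your proposal is correct and follows essentially the same route as the paper: the same chain of equivalences, with the same constructions (the sets $\{h: hx_ih^{-1}\in\Sigma\}$ and $F'=\cup_i Fx_i^{-1}$ for $(1)\Leftrightarrow(2)$, the differences $x_ix_j^{-1}$ and the tuple padded with $1$ for $(2)\Leftrightarrow(3)$, specialization to the regular action and orbit-by-orbit coordinates $x_i=g_iy_{l(i)}$ for $(3)\Leftrightarrow(4)$, and transfer between left and right cosets via inversion for $(1)\Leftrightarrow(5)$). The only cosmetic caveat is that in the contrapositive of $(1)\Rightarrow(2)$ one should discard the possibly empty sets $S_i$ (as the paper does with its index set $I$), which does not affect the argument.
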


\begin{proof}
$1\implies 2.$ Let $n\geq 1$, $x_1,\ldots,x_n\in H\setminus\{1\}$ and $F\subset H$ a finite subset such that $H_{F,\bar{x}}=\emptyset$. Define $F'=\cup_{i=1}^n Fx_i^{-1}$ and, for $k=1,\ldots ,n$, $S_k=\{h\in H\,:\, hx_kh^{-1}\in\Sigma\}$. Let $I=\{k\in\{1,\ldots,n\}\,:\,S_k\neq\emptyset\}$. Since $H_{F,\bar{x}}=\emptyset$ we have $H\setminus\Sigma F'\subset\cup_{k\in I} S_k$ (and $I$ is non-empty since $\Sigma$ is highly core-free). Since, for all $k\in I$, $1\neq x_k\in\rm{Core}_{S_k}(\Sigma)$, $\Sigma$ is not highly core-free.

\vspace{0.2cm}

\noindent $2\implies 3.$ Let $k\geq 2$, $\bar{x}=(x_1,\dots, x_k)\in H^{(k)}$ and $F\subset H$ finite. Consider the collection of non-trivial elements $y_{ij}=x_ix_j^{-1}\in H$  for $i\neq j$ and the finite set $F'=\cup_{i=1}^kFx_i^{-1}$. It is easy to see that $H_{F',\bar{y}}\subset G_{F,\bar{x}}$.

\vspace{0.2cm}

\noindent $3\implies 4.$ Since $H\curvearrowright X$ is free, we may assume that $X=H\times I$, where $I\subset\N$ and the action is given by $h(g,n)=(hg,n)$ for $h\in H$ and $(g,n)\in X$. Let $k\geq 2$, $\bar{x}=(x_1,\ldots,x_k)\in X^{(k)}$ and $F\subset X$, a finite subset. Write $x_i=(g_i,n_i)$ and $\Sigma F=\sqcup_{i=1}^l\Sigma(t_i,m_i)$. Consider the set $F'=\{t_i\,:\,1\leq i\leq l\}\subset H$ and let $m$ be the size of the set $\{g_i\,:\,1\leq i\leq k\}$. If $m\geq 2$ let $\bar{y}=(y_1,\ldots y_m)\in H^{(m)}$ such that $\{g_i\,:\,1\leq i\leq k\}=\{y_1,\ldots,y_m\}$. It is easy to see that $G_{F',\bar{y}}\subset E_{F,\bar{x}}$. If $m=1$, all the $g_i$ are equal to $y\in H$. Since the $x_i$'s are pairwise distinct, the $n_i$'s must be pairwise distinct. Hence, $E_{F,\bar{x}}=\{h\in H\,:\,hx_i\notin\Sigma F\,\,,\forall i\}$. Taking any $y'\in H$ with $y'\neq y$ we have $G_{F',\bar{y}}\subset E_{F,\bar{x}}$, where $\bar{y}=(y,y')$.

\vspace{0.2cm}

\noindent $4\implies 3.$ Applying $4$ to the free action $H\curvearrowright H$ we conclude that $3$ holds.

\vspace{0.2cm}

\noindent $3\implies 2.$ Let $n\geq 1$, $x_1,\ldots,x_n\in H\setminus\{1\}$ and $F\subset H$ finite. To show that $H_{F,\bar{x}}\neq\emptyset$, we may suppose that the $x_i$ are pairwise distinct. Define $y_i=x_i$ for $1\leq i\leq n$. Define $k=n+1$ and $y_k=1$. Then $\bar{y}=(y_1,\ldots,y_k)\in H^{(k)}$ and it is easy to see that $G_{F,\bar{y}}\subset H_{F,\bar{x}}$.

\vspace{0.2cm}

\noindent $2\implies 1.$ Let $F\subset H$ a finite subset, $S_1,\ldots, S_n\subset H$ non-empty such that $H\setminus \Sigma F\subset\cup_{k=1}^n S_k$ and Core$_{S_k}(\Sigma)$ contains a non-trivial element $x_k$ for every $k$. Let $\bar{x}=(x_1,\ldots, x_n)$ and $F'=Fx_1$. If $h\in H$ is such that $hx_i\notin\Sigma F'$ for all $i$ then, $hx_1\notin\Sigma F'$. Hence, $h\notin\Sigma F$. By hypothesis, there exists $1\leq k\leq n$ such that $h\in S_k$. Since $x_k\in\cap_{h\in S_k}h^{-1}\Sigma h$ we find that $hx_kh^{-1}\in\Sigma$. Hence, $H_{F',\bar{x}}=\emptyset$.

\vspace{0.2cm}

\noindent $1\implies 5.$ Let $F\subset H/\Sigma$ be a finite subset, $n\geq 1$, and $S_1,\ldots, S_n\subset H/\Sigma$ non-empty subsets such that $(H/\Sigma)\setminus F\subset\cup_{k=1}^n S_k$. Write $F=(\Sigma F')^{-1}$, where $F'\subset H$ is finite and $S_k=(S_k')^{-1}\Sigma$, where $S_k'\subset H$ is non-empty. It is easy to check that $H\setminus\Sigma F'\subset\cup_{k=1}^n\Sigma S_k'$ hence, there exists $1\leq k\leq n$ such that Core$_{\Sigma S'_k}(\Sigma)=\text{Core}_{S'_k}(\Sigma)=\{1\}$. Let $h\in H$ such that $hx=x$ for all $x\in S_k$. Then for every $y\in S'_k$, we have $hy^{-1}\Sigma=y^{-1}\Sigma$. So $h\in \cap_{y\in S'_k} y^{-1}\Sigma y=\text{Core}_{S'_k}(\Sigma)=\{1\}$.

\vspace{0.2cm}

\noindent $5\implies 1.$ Let $F\subset H$ be a finite subset, $n\geq 1$, and $S_1,\ldots, S_n\subset H$ such that $H\setminus\Sigma F\subset\cup_{k=1}^n S_k$. Define the finite subset $F'=F^{-1}\Sigma\subset H/\Sigma$ and the non-empty subsets $S_k'=S_k^{-1}\Sigma\subset H/\Sigma$. It is easy to see that $H/\Sigma\setminus F'\subset\cup_{k=1}^n S'_k$. By hypothesis there exists $k$ such that $hx=x\, \forall x\in S'_k$, for the action of $H$ on the left cosets $H/\Sigma$, implies $h=1$. Thus Core$_{S_k}(\Sigma)=\{1\}$.
\end{proof}
\begin{example}
The highly core-free condition is stricly stronger than core-freeness, even for infinite groups. Indeed, let $H=S_{\infty}$ be the group of finitely supported permutations of $\N$ and $\Sigma=\text{Stab}_{S_{\infty}}(0)<H$ be the stabilizer of $0\in\N$. Since $S_{\infty}\curvearrowright\N$ is transitive and faithful, $\Sigma$ is core-free. However, $\Sigma $ is not highly core-free since the action $S_{\infty}\curvearrowright\N$ is not highly faithful: it suffices to write $\N=\{0,1\}\cup\{k\in\N\,:\,k\geq 2\}$ and to use directly the definition.
\end{example}

\noindent Motivated by the preceding lemma, we give the following definition.

\begin{definition}
Let $H\curvearrowright X$ be an action of a group $H$ on an infinite set $X$ and $\Sigma< H$ a subgroup. We say that $\Sigma$ \textit{is highly core-free with respect to the action} $H\curvearrowright X$ if, $\forall k\geq 2$, $\forall \bar{x}=(x_1,\dots, x_k)\in X^{(k)}$, and for every finite subset $F\subset X$, the set
$$E_{F,\bar{x}}=\{h\in H\,:\,hx_i\notin \Sigma F\,\,\forall i \textrm{ and } \Sigma hx_i\cap\Sigma hx_j=\emptyset\,\, \forall i\neq j\}\quad\text{is non-empty}.$$
\end{definition}

\noindent Notice that if $\Sigma$ is highly core-free with respect to the action $H\curvearrowright X$, then the action $H\curvearrowright X$ has no finite orbits. Notice also that if $\Sigma$ is highly core-free with respect to the action $H\curvearrowright X$, then the action $\Sigma\curvearrowright X$ has infinitely many orbits.

\begin{remark}
Let $H$ be a group and $\Sigma< H$ a subgroup. If there exists a highly transitive action $H\curvearrowright X$ such that the action $\Sigma \curvearrowright X$ has infinitely many orbits, then $\Sigma$ is highly core-free w.r.t. $H\curvearrowright X$.
\end{remark}

\begin{proof}
Let $k\geq 2$, $\bar{x}=(x_1,\dots,x_k)\in X^{(k)}$ and $F\subset X$ a finite subset. We want to prove that
$$E_{F,\bar{x}}=\{h\in H\,:\,hx_i\notin \Sigma F\,\,\forall i \textrm{ and } \Sigma hx_i\cap \Sigma hx_j=\emptyset\,\, \forall i\neq j\}\neq\emptyset.$$
Let $Y=\Sigma F\cup(\cup_{j=1}^k  \Sigma x_j)$. Since $\Sigma \curvearrowright X$ has infinitely many orbits there exists an element $(z_1,\dots, z_k)\in X^{(k)}$ such that $\Sigma z_i\subset Y^c$ and $\Sigma z_i\cap \Sigma z_j=\emptyset$ for all $i\neq j$. By high transitivity of $H\curvearrowright X$ there exists $h \in H$ such that $h x_i=z_i$ for every $i$. Then $h x_i=z_i \notin \Sigma F$ for all $i$ and $\Sigma h x_i\cap \Sigma h x_j=\Sigma z_i\cap \Sigma z_j=\emptyset$ for all $i\neq j$ so $h \in E_{F,\bar{x}}$.
\end{proof}

\noindent Let $H$ be a group. For $g\in H$ denote by $Cl_H(g)=\{hgh^{-1}\,:\,h\in H\}$ the conjugacy class of $g$ and by $C_H(g)=\{h\in H\,:\,gh=hg\}$ the centralizer of $g$. Recall that the group $H$ is called icc if for every $g\in H\setminus\{1\}$ the set $Cl_H(g)$ is infinite (or, equivalently, the subgroup $C_H(g)$ has infinite index). We call a subgroup $\Sigma<H$ \textit{icc relative to $H$} if, for every $\sigma\in\Sigma\setminus\{1\}$,  the set $Cl_H(\sigma)$ is infinite.

\begin{lemma}\label{hcf}
Let $\Sigma< H$ be a subgroup such that
\begin{itemize}
\item $\Sigma$ has infinite index.
\item $\Sigma$ is icc relative to $H$.
\item For any $h\in H\setminus\{1\}$ the set $\Sigma\cap Cl_H(h)$ is finite.
\end{itemize}
Then, for all $n\geq 1$, for all $x_1,\dots, x_n\in H\setminus\{1\}$ and for every finite subset $F\subset H$, the set
$$H_{F,\bar{x}}=\{h\in H\,:\,hx_i\notin \Sigma F\,\,\forall i \textrm{ and } hx_ih^{-1}\notin \Sigma \,\,\forall i\}$$
contains infinitely many $\Sigma$-classes. In particular, $\Sigma$ is highly core-free.
\end{lemma}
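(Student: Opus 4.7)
The plan is to first show that $H_{F,\bar{x}}$ is non-empty for any admissible data, then iterate to produce infinitely many $\Sigma$-classes. The key tool will be B.H.\ Neumann's covering lemma: a group cannot be written as a finite union of cosets of infinite-index subgroups. First I would translate the defining conditions on $h \in H_{F,\bar{x}}$ into ``$h$ avoids finitely many cosets''. The condition $hx_i \notin \Sigma F$ becomes $h \notin \Sigma F x_i^{-1}$, a finite union of right $\Sigma$-cosets. For the conjugation condition, enumerate $\Sigma \cap Cl_H(x_i) = \{\sigma_{i,1}, \ldots, \sigma_{i,m_i}\}$ (finite by the third hypothesis), pick $h_{i,j} \in H$ with $h_{i,j} x_i h_{i,j}^{-1} = \sigma_{i,j}$, and observe that $h x_i h^{-1} \in \Sigma$ if and only if $h \in \bigcup_j h_{i,j} C_H(x_i)$, a finite union of right $C_H(x_i)$-cosets.

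Next I would check that each subgroup that appears has infinite index in $H$. $\Sigma$ has infinite index by hypothesis. If $\Sigma \cap Cl_H(x_i) = \emptyset$ the conjugation condition for $i$ is vacuous and contributes nothing; otherwise each $\sigma_{i,j}$ is a non-trivial element of $\Sigma$, hence has infinite $H$-conjugacy class by the icc-relative-to-$H$ assumption, so $C_H(\sigma_{i,j})$ and its conjugate $C_H(x_i) = h_{i,j}^{-1} C_H(\sigma_{i,j}) h_{i,j}$ both have infinite index in $H$. Consequently $H \setminus H_{F,\bar{x}}$ is a finite union of cosets of infinite-index subgroups of $H$, and Neumann's lemma yields $H_{F,\bar{x}} \neq \emptyset$. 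To upgrade this to infinitely many $\Sigma$-classes, I would iterate: given $h_1, \ldots, h_k \in H_{F,\bar{x}}$ already lying in pairwise distinct $\Sigma$-classes, apply the non-emptiness statement to the enlarged finite set $F_k = F \cup \{h_1 x_1, \ldots, h_k x_1\}$ to produce some $h_{k+1} \in H_{F_k, \bar{x}} \subset H_{F,\bar{x}}$; the condition $h_{k+1} x_1 \notin \Sigma F_k$ in particular rules out $h_{k+1} \in \Sigma h_\ell$ for every $\ell \leq k$, completing the induction.

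The final assertion that $\Sigma$ is highly core-free is then immediate from the implication $(2) \Rightarrow (1)$ of the preceding lemma. The main conceptual step is recognizing Neumann's covering lemma as the right ingredient; once it is in place, each of the three hypotheses finds its natural role — infinite index of $\Sigma$ handles the first family of bad cosets, finiteness of $\Sigma \cap Cl_H(x_i)$ keeps the second family finite, and icc-relative-to-$H$ forces the centralizers $C_H(x_i)$ appearing in that family to have infinite index as well.
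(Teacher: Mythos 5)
Your proof is correct and follows essentially the same route as the paper: you decompose the complement of $H_{F,\bar{x}}$ into finitely many cosets of $\Sigma$ and of centralizers that are conjugate to $C_H(\sigma)$ for nontrivial $\sigma\in\Sigma$ (all of infinite index by the hypotheses) and invoke B.~H.~Neumann's covering lemma, exactly as in the paper's argument. The only cosmetic difference is that the paper absorbs the (assumed finitely many) $\Sigma$-classes of $H_{F,\bar{x}}$ directly into the covering to reach a contradiction, whereas you first prove non-emptiness and then produce infinitely many $\Sigma$-classes by enlarging $F$; both versions are valid.
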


\begin{proof}
We suppose that $\Sigma$ has infinite index, the set $\Sigma\cap Cl_H(h)$ is finite, $\forall h\in H\setminus\{1\}$, and there exist $n\geq 1$, $ x_1,\dots, x_n\in H\setminus\{1\}$ and $F\subset H$ finite such that $H_{F,\bar{x}}$ contains finitely many $\Sigma$-classes. We shall show that there exists $\sigma\in \Sigma\setminus\{1\}$ such that $C_H(\sigma)$ has finite index.

\vspace{0.2cm}

\noindent Define $Y=H_{F,\bar{x}}\sqcup \{h\in H\,:\,\exists i\,\,hx_i\in\Sigma F\}$ and $H_{i,\sigma}=\{h\in H: hx_ih^{-1}=\sigma\}$, for $1\leq i\leq n$ and $\sigma\in\Sigma$. Since $\{h\in H\,:\,\exists i\,\,hx_i\in\Sigma F\}=\cup_i\Sigma Fx_i^{-1}$, $Y$ is a finite union of $\Sigma$-classes. Also, note that $H\setminus Y\subset \bigcup_{1\leq i\leq n,\sigma\in\Sigma} H_{i,\sigma}$.
 
\vspace{0.2cm}

\noindent For $i=1,\ldots,n$ and $\sigma\in \Sigma$, if $H_{i,\sigma}\neq \emptyset$, it is a coset of the form $C_H(\sigma)g_{i,\sigma}$ for $g_{i,\sigma}\in H_{i,\sigma}$. Indeed, if $g\in H_{i,\sigma}$, we have $gg_{i,\sigma}^{-1}\sigma(gg_{i,\sigma}^{-1})^{-1}=gx_ig^{-1}=\sigma$, hence $gg_{i,\sigma}^{-1}\in C_H(\sigma)$. Conversely, if $h\in C_H(\sigma)$, we have $(hg_{i,\sigma})x_i(hg_{i,\sigma})^{-1}=\sigma$, so $hg_{i,\sigma}\in H_{i,\sigma}$.

\vspace{0.2cm}

\noindent Since, for all $1\leq i\leq n$, the set $\{\sigma\in\Sigma\,:\,H_{i,\sigma}\neq\emptyset\}$ is finite as it is a subset of $\Sigma\cap Cl_H(x_i)$ it follows that $H\setminus Y$ is covered by a finite union of the cosets $C_H(\sigma)g_{i,\sigma}$ for $1\leq i\leq n$ and $\sigma\in\Sigma$ such that $H_{i,\sigma}\neq\emptyset$. For those cosets $C_H(\sigma)g_{i,\sigma}$ one has $\sigma\neq 1$ since $H_{i,\sigma}\neq\emptyset$ implies that $\sigma$ is conjugated to some non-trivial $x_i$. Since $Y$ is a finite union of $\Sigma$-classes, $H$ itself is a finite union of cosets of subgroups either equal to $\Sigma$ or of the form $C_H(\sigma)$ where $\sigma\in\Sigma\setminus\{1\}$. Thus by \cite[Lemma 4.1]{NeumannCovered}, and since $\Sigma$ is supposed to have infinite index, there exists $\sigma\in \Sigma\setminus\{1\}$ such that $C_H(\sigma)$ has finite index.
\end{proof}

\begin{example}\label{Exhcf}
The easiest example of groups satisfying the hypothesis of Lemma \ref{hcf} is a finite relatively icc subgroup $\Sigma$ of $H$. In particular, a finite subgroup of an icc group. Other examples are given below.

\vspace{0.2cm}

\noindent Observe that, if $\Sigma< H$ is malnormal, then $C_H(\sigma)\subset\Sigma$ for all $\sigma\in\Sigma\setminus\{1\}$. Hence, if $\Sigma$ is malnormal and has infinite index then $\Sigma$ is relatively icc.

\vspace{0.2cm}

\noindent Suppose that $\Sigma<H$ is a malnormal subgroup then, for all $h\in H\setminus\{1\}$ with the property that $Cl_H(h)\cap\Sigma\neq\emptyset$, there exists $\sigma_0\in\Sigma\setminus\{1\}$ such that $Cl_H(h)\cap\Sigma= Cl_{\Sigma}(\sigma_0)$. Indeed, let $h\neq 1$ and $\sigma_0=ghg^{-1} \in \Sigma$ for some $g\in H$. If $\sigma=tht^{-1}\in Cl_H(h)\cap\Sigma$ then $\sigma\neq 1$ and, $\sigma= tg^{-1}\sigma_0 g t^{-1}=tg^{-1}\sigma_0 (tg^{-1})^{-1}\in tg^{-1}\Sigma (tg^{-1})^{-1}$. So $\sigma\in\Sigma\cap tg^{-1}\Sigma gt^{-1}$ and thus $tg^{-1}\in\Sigma $ by malnormality of $\Sigma$. Hence $\sigma \in Cl_{\Sigma}(\sigma_0)$ and this proves $Cl_H(h)\cap\Sigma\subset Cl_{\Sigma}(\sigma_0)$. The other inclusion is obvious.

\vspace{0.2cm}

\noindent By the preceding remarks any infinite index and malnormal subgroup $\Sigma< H$ such that $Cl_{\Sigma}(\sigma)$ is finite for all $\sigma\in\Sigma$ satisfy the hypothesis of Lemma \ref{hcf}. Here are some particular examples.
\begin{itemize}
\item A finite malnormal subgroup $\Sigma$ of an infinite group $H$.
\item $H=\Sigma* G$, $G$ and $\Sigma$ are non-trivial and $\Sigma$ abelian. More generally, an abelian malnormal subgroup $\Sigma$ of infinite index of an infinite group $H$.
\item Let $K$ be an infinite (commutative) field. Let $H=K^*\ltimes K$ and $\Sigma=K^*< H$. It is shown in \cite{HW11} that $\Sigma$ is malnormal in $H$.
\item Let $H=\langle a,b \rangle\simeq \mathbf{F}_2$ be a free group on 2 generators $a$ and $b$. The infinite cyclic subgroup $\Sigma$ generated by any primitive element (e.g. $a^kba^{-l}b^{-1}$ with $k\neq 0 \neq l\in\Z $) is malnormal (see Example 7.A. in \cite{HW11} and \cite{BMR}).
\end{itemize}
\end{example}

\section{The case of an HNN-extension}

\noindent Let $H$ be a countable infinite group, $\Sigma<H$ a subgroup and $\theta\,:\,\Sigma\rightarrow H$ an injective group homomorphism. Let $\Gamma={\rm HNN}(H,\Sigma,\theta)=\langle H,t\,|\,\theta(\sigma)=t\sigma t^{-1}\,\forall\sigma\in\Sigma\rangle$ and, for $\epsilon\in\{-1,1\}$,
$$\Sigma_{\epsilon}=\left\{\begin{array}{lcl}
\Sigma &\text{if}&\epsilon=1,\\
\theta(\Sigma) &\text{if}&\epsilon=-1.
\end{array}\right.$$
Let $H\curvearrowright X$ be an action of $H$ on an infinite countable set $X$. Let $S(X)$ be the Polish group of bijections of $X$. Although the action is not supposed to be faithful we write, in order to simplify the notations, the same symbol $h\in S(X)$ for the action of $h\in H$ on the set $X$.
\vspace{0.2cm}

\noindent Define
$$Z=\{w\in S(X)\,:\,\theta(\sigma)=w\sigma w^{-1}\,\,\,\forall\sigma\in\Sigma\}.$$
Then $Z$ is a closed subset of $S(X)$. Suppose moreover that the actions $\Sigma_{\epsilon}\curvearrowright X$ are free for $\epsilon\in\{-1,1\}$ and have infinitely many orbits (this last assumption is automatically satisfied if $\Sigma_{\epsilon}$ is h.c.f. w.r.t. $H\curvearrowright X$ for all $\epsilon\in\{-1,1\}$). With these assumptions, $Z$ is non-empty.

\vspace{0.2cm}

\noindent For all $w\in Z$ there exists a unique group homomorphism $\pi_w\,:\,\Gamma\rightarrow S(X)$ such that $\pi_w(t)=w$ and $\pi_w(h)=h$ for all $h\in H$.

\begin{lemma}\label{HNNHT}
If for $\epsilon\in\{-1,1\}$, $\Sigma_{\epsilon}$ is highly core-free w.r.t. $H\curvearrowright X$ then the set\\
 $O=\{w\in Z\,:\,\pi_w\,\,\text{is highly transitive}\}$ is a dense $G_{\delta}$ in $Z$.
\end{lemma}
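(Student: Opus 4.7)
Since $Z$ is a closed subset of the Polish group $S(X)$, it is itself a Polish (hence Baire) space, so it suffices to exhibit $O$ as a dense $G_\delta$. Recall that a basic neighborhood of $w_0 \in S(X)$ is specified by a finite $F \subset X$ together with the two constraints $w|_F = w_0|_F$ and $w^{-1}|_F = w_0^{-1}|_F$. For any fixed $g \in \Gamma$, the map $w \mapsto \pi_w(g)$ is continuous from $Z$ to $S(X)$ (since $g$ is a finite word in $t^{\pm 1}$ and elements of $H$, and each of those factors depends continuously on $w$), hence for $k \geq 2$ and $\bar x, \bar y \in X^{(k)}$ the set $\{w \in Z : \pi_w(g)\bar x = \bar y\}$ is clopen in $Z$. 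Writing high transitivity as
\[
O \;=\; \bigcap_{k \geq 2}\;\bigcap_{(\bar x, \bar y) \in X^{(k)} \times X^{(k)}}\;\bigcup_{g \in \Gamma}\,\{w \in Z : \pi_w(g)\bar x = \bar y\}
\]
and using countability of $X$ and $\Gamma$, I conclude that $O$ is a $G_\delta$ in $Z$.

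For density, I would fix $w_0 \in Z$, a finite $F \subset X$, an integer $k \geq 2$, and $\bar x, \bar y \in X^{(k)}$, and look for $w \in Z$ that coincides with $w_0$ on $F' := F \cup w_0^{-1}(F)$ (which automatically places $w$ in the chosen neighborhood) and such that, with $g := h_2 t h_1$ for suitable $h_1,h_2 \in H$, one has $\pi_w(g)\bar x = \bar y$, i.e., $w h_1 \bar x = h_2^{-1}\bar y$. I would apply the hypothesis that $\Sigma$ is highly core-free w.r.t.\ $H \actson X$ to pick $h_1 \in H$ with $h_1 x_i \notin \Sigma F'$ for every $i$ and $\Sigma h_1 x_i \cap \Sigma h_1 x_j = \emptyset$ for $i \neq j$; symmetrically, apply the hypothesis that $\theta(\Sigma)$ is highly core-free w.r.t.\ $H \actson X$ to pick $h \in H$ with $h y_i \notin \theta(\Sigma) w_0(F')$ for every $i$ and pairwise disjoint orbits $\theta(\Sigma) h y_i$, and set $h_2 := h^{-1}$.

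I would then construct $w$ orbit by orbit. On the $\Sigma$-invariant set $\Sigma F'$, set $w = w_0$; its image is $\theta(\Sigma) w_0(F')$, because $w_0 \in Z$. On each orbit $\Sigma h_1 x_i$ (disjoint from $\Sigma F'$ and from the other such orbits by the choice of $h_1$), set $w(\sigma h_1 x_i) := \theta(\sigma) h_2^{-1} y_i$; by freeness of both $\Sigma \actson X$ and $\theta(\Sigma) \actson X$, this is a well-defined equivariant bijection onto $\theta(\Sigma) h_2^{-1} y_i$, a $\theta(\Sigma)$-orbit disjoint from $\theta(\Sigma) w_0(F')$ and from the others by the choice of $h_2$. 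The remaining $\Sigma$-orbits (resp.\ $\theta(\Sigma)$-orbits) form a countably infinite family on each side, since the highly core-free hypotheses imply infinitely many orbits; fix any bijection between these two families and extend $w$ equivariantly via $\theta$ on each matched pair. The resulting $w$ lies in $Z$, agrees with $w_0$ on $F'$ (hence lies in the prescribed neighborhood), and satisfies $\pi_w(h_2 t h_1)\bar x = \bar y$.

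The main obstacle is the correct combination of the two highly core-free assumptions with the neighborhood structure of $S(X)$. Since the basic neighborhoods constrain both $w$ and $w^{-1}$, one must pass from $F$ to the larger set $F' = F \cup w_0^{-1}(F)$ and invoke the highly core-free hypothesis on $\Sigma$ to free the source orbits and on $\theta(\Sigma)$ to free the target orbits; only after this double use of the hypothesis are the data $(h_1, h_2, F', w_0)$ arranged so that the source orbits $\Sigma h_1 x_i$ and the target orbits $\theta(\Sigma) h_2^{-1} y_i$ are separated from everything the neighborhood pins down, and the rest of the construction becomes routine orbit-by-orbit bookkeeping.
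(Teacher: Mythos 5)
Your proof is correct, and its overall strategy coincides with the paper's: the same decomposition $O=\bigcap_{\bar x,\bar y}O_{\bar x,\bar y}$ with each piece open, the same double use of the highly core-free hypothesis ($\Sigma$ at the source tuple, $\theta(\Sigma)$ at the target tuple), and the same witness of the form $h_2 t h_1$ acting as $\pi_w(h_2th_1)x_i=h_2wh_1x_i=y_i$. The one genuine difference is how the modified intertwiner is completed to a bijection. The paper changes $w$ only on a \emph{finite} union of $\Sigma$-orbits, by swapping the $w$-images of the orbits $\Sigma h x_k$ and $\Sigma w^{-1}g^{-1}y_k$; to make this swap legitimate it chooses $g$ first and then enlarges the finite set to $F'=F\cup\{w^{-1}g^{-1}y_k\}$ before choosing $h$, so that the two finite families of orbits are disjoint from each other and from $\Sigma F$. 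You instead keep $w_0$ only on $\Sigma F'$, send $\Sigma h_1x_i$ onto $\theta(\Sigma)h_2^{-1}y_i$, and rebuild $w$ equivariantly on the entire remainder by matching the leftover $\Sigma$-orbits with the leftover $\theta(\Sigma)$-orbits; this avoids the cross-disjointness bookkeeping, but it relies on the facts (both available in the paper's standing hypotheses and remarks) that the two actions of $\Sigma_{\pm1}$ are free and have infinitely many orbits, so that both remainders are countably infinite unions of free orbits and hence equivariantly isomorphic. Both routes are valid; the paper's finite perturbation is more economical, while your version is more explicit about the two-sided neighbourhood basis of $S(X)$ (passing to $F'=F\cup w_0^{-1}(F)$), a point the paper leaves implicit.
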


\begin{proof}
Since a $2$-transitive action is transitive we have $O=\cap_{n\geq 2}\cap_{\bar{x},\bar{y}\in X^{(n)}} O_{\bar{x},\bar{y}}$ where, for $\bar{x}=(x_1,\ldots x_n),\bar{y}=(y_1,\ldots, y_n)\in X^{(n)}$, 
$$O_{\bar{x},\bar{y}}=\{w\in Z\,:\,\exists g\in\Gamma,\,\,\pi_w(g)x_k=y_k\,\,\forall 1\leq k\leq n\}.$$
It is easy to see that $O_{\bar{x},\bar{y}}$ is open in $Z$. Let us show that $O_{\bar{x},\bar{y}}$ is dense in $Z$. Let $w\in Z$ and $F\subset X$ be a finite subset.
We claim that there exists $g,h\in H$ such that $hx_k\notin\Sigma F$, $g^{-1}y_k\notin w(\Sigma F)$ for all $1\leq k\leq n$ and the sets $\Sigma hx_k$, $\Sigma w^{-1} g^{-1}y_k$ for $1\leq k\leq n$ are pairwise disjoint. Indeed, since $\theta(\Sigma)$ is highly core-free w.r.t. $H \curvearrowright X$, the set
$$E_{wF,\bar{y}}=\{h\in H\,:\,hy_i\notin \theta(\Sigma)wF\,\,\forall i \textrm{ and } \theta(\Sigma)hy_i\cap \theta(\Sigma)hy_j=\emptyset\,\, \forall i\neq j\}$$
is not empty. Take $g^{-1}\in E_{wF,\bar{y}}$ and let $F':=F\cup (\cup_{k=1}^n w^{-1}g^{-1}y_k)$. Since $\Sigma$ is highly core-free w.r.t. $H \curvearrowright X$, the set $E_{F',\bar{x}}=\{h\in H\,:\,hx_i\notin \Sigma F'\,\,\forall i \textrm{ and } \Sigma hx_i\cap \Sigma hx_j=\emptyset\,\, \forall i\neq j\}$ is not empty and we take $h\in E_{F',\bar{x}}$. It is clear that $g$, $h$ satisfy the claimed properties.

\vspace{0.2cm}

\noindent Define $Y=\sqcup_{k=1}^n\Sigma hx_k\sqcup\Sigma w^{-1}g^{-1}y_k$. Then $F\subset Y^c$ and $w(Y)=\sqcup_{k=1}^n\theta(\Sigma) whx_k\sqcup\theta(\Sigma)g^{-1}y_k$. Since the actions $\Sigma_{\epsilon}\curvearrowright X$ are free, we can define $\gamma\in S(X)$ by $\gamma|_{Y^c}=w|_{Y^c}$ and,
$$\gamma(\sigma hx_k)=\theta(\sigma)g^{-1}y_k,\quad\gamma(\sigma w^{-1}g^{-1}y_k)=\theta(\sigma) whx_k\,\,\,\sigma\in\Sigma,\,\,1\leq k\leq n.$$
By construction $\gamma\in Z$ and $\gamma|_F=w|_F$. Moreover, $\pi_{\gamma}(gth)x_k=g\gamma hx_k=gg^{-1}y_k=y_k$ for all $1\leq k\leq n$.
\end{proof}

\begin{theorem}\label{ThmHNN}
Let $H$ be a infinite countable group, $\Sigma< H$ a subgroup and $\theta\,:\,\Sigma\rightarrow H$ an injective group homomorphism. Suppose that, for $\epsilon\in\{-1,1\}$, $\Sigma_{\epsilon}$ is highly core-free in $H$. Then $\Gamma$ admits a highly transitive and faithful action on an infinite countable set. Moreover, if $H$ is amenable and $\Sigma$ is finite, this action can be chosen to be amenable.
\end{theorem}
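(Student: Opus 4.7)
My plan is to invoke the Baire category theorem on the Polish space $Z\subset S(X)$ introduced before Lemma \ref{HNNHT}, for a carefully chosen $H$-action on $X$. I begin by taking $X=H$ with the left translation action of $H$. This is a free action, so both $\Sigma$ and $\theta(\Sigma)$ act freely on $X$, and by the equivalence $(1)\Leftrightarrow(4)$ in the characterisation lemma of Section~1, the (abstract) highly-core-free hypothesis on $\Sigma_\epsilon$ implies that $\Sigma_\epsilon$ is highly core-free with respect to $H\actson X$. Lemma \ref{HNNHT} then yields that $O=\{w\in Z:\pi_w\ \text{is highly transitive}\}$ is a dense $G_\delta$ in $Z$.

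To upgrade high transitivity to faithfulness, for each $g\in\Gamma\setminus\{1\}$ I would set
\[
U_g=\{w\in Z:\pi_w(g)\neq\id\}.
\]
Openness of $U_g$ is immediate from the pointwise topology on $S(X)$. For density, given $w\in Z$ and a finite $F\subset X$, put $g$ in Britton-reduced form $g=h_0 t^{\epsilon_1}h_1\cdots t^{\epsilon_n}h_n$ (Britton's lemma then guarantees $g\neq 1$) and iterate the extension argument at the end of the proof of Lemma \ref{HNNHT} along the subwords: at each step one uses the highly-core-free property of the relevant $\Sigma_{\epsilon_j}$ to pick a fresh $\Sigma_{\epsilon_j}$-coset disjoint from $F$ and from the cosets used at previous steps, and one extends $\gamma\in Z$ on these new cosets while keeping $\gamma|_F=w|_F$. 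This forces $\pi_\gamma(g)(x)$ to leave $\Sigma x$ for a well-chosen starting point $x\in X$, producing $\gamma\in U_g$ close to $w$. Countability of $\Gamma$ then makes $\bigcap_{g\neq 1}U_g$ a dense $G_\delta$, and Baire's theorem gives $w\in O\cap\bigcap_{g\neq 1}U_g$, yielding the desired faithful and highly transitive $\pi_w$.

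For the amenability addendum, assume $H$ is amenable and $\Sigma$ is finite, and fix a F\o lner sequence $(F_n)$ for $H$ in $X=H$. Finiteness of $\Sigma$ means that the relation $w\sigma=\theta(\sigma)w$ only couples $w$ along $\Sigma$-cosets of size $|\Sigma|$, so one can refine the previous extension construction so that, at stage $n$, $w$ moves the portion of $F_n$ outside the current control set only by a bounded amount. The resulting $w$ then admits the $F_n$'s as an approximately invariant sequence which, combined with amenability of $H\actson H$, yields an amenable $\Gamma$-action.

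The main obstacle is the density proof for $U_g$: making the Britton-normal-form induction work while simultaneously respecting the coherence constraints $\gamma|_F=w|_F$ and $\gamma\sigma=\theta(\sigma)\gamma$ requires careful bookkeeping of the successive fresh $\Sigma$- and $\theta(\Sigma)$-cosets. This is precisely where the full strength of the highly-core-free hypothesis, via the equivalent form $(2)$ or $(3)$ of the characterisation lemma, is essential, since one must be able to avoid a finite set of forbidden cosets at every stage of the iteration.
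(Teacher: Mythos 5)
Your skeleton (Baire category in $Z$, Lemma \ref{HNNHT} for high transitivity, intersecting with open sets $O_g=\{w:\pi_w(g)\neq\id\}$) matches the paper, but the step that carries the real content of the theorem --- density of the faithfulness sets --- is not proved: you only announce a Britton-normal-form induction and yourself flag its bookkeeping as ``the main obstacle''. The missing idea in your write-up is the paper's choice of the $H$-set $X$. Instead of $X=H$ with left translation, the paper takes $X=\Gamma\times\N$ with the free action $g(x,n)=(gx,n)$ of the \emph{whole} HNN extension $\Gamma$, restricted to $H$. The second factor provides an exhaustion $X=\cup^{\uparrow}X_N$ by infinite, globally $\Gamma$-invariant subsets, and this makes faithfulness generic without any normal-form combinatorics: given $w\in Z$ and finite $F$, choose $N$ with $\Sigma F\cup w(\Sigma F)\subset X_N$, define $\gamma$ to agree with $w$ on $\Sigma F$, to be any $\Sigma$-$\theta(\Sigma)$ equivariant bijection $X_N\setminus\Sigma F\rightarrow X_N\setminus w(\Sigma F)$ (possible since $\Sigma_\epsilon$ has infinite index, hence infinitely many orbits there), and to act as the stable letter $t$ of the free $\Gamma$-action on $X_N^c$. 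Then $\gamma\in Z$, $\gamma|_F=w|_F$, and $\pi_\gamma$ restricted to $X_N^c$ coincides with the free, hence faithful, $\Gamma$-action, so $\pi_\gamma$ is faithful; thus the whole set $\{w:\pi_w\text{ faithful}\}$ is dense at one stroke. With your choice $X=H$ this route is unavailable (there is no proper infinite $\Gamma$-invariant filtration of $H$), which is precisely why you are forced into the delicate Britton induction you did not carry out; that induction is plausibly completable, but as written it is a gap, not a proof.

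The amenability addendum is also not established by your argument. The paper does not construct approximately invariant sets by hand: it takes a F{\o}lner sequence $(D_n)$ in $H$, notes that $C_n=D_n\times\{0\}$ is a F{\o}lner sequence for $H\curvearrowright X=\Gamma\times\N$ with $|C_n|\rightarrow\infty$, and invokes \cite[Lemma 3.2]{Fi12}, which is a genericity statement that can be intersected with the two dense $G_\delta$'s already obtained. Your sketch (``refine the extension so that $w$ moves $F_n$ only by a bounded amount'') neither defines the refinement nor explains why the limiting $w$ is simultaneously in the high-transitivity and faithfulness residual sets, so this part would need to be reworked --- most naturally by switching to the paper's $X=\Gamma\times\N$ and citing the quoted lemma.
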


\begin{proof}
 Consider the free action $\Gamma\curvearrowright X$ with $X=\Gamma\times\N$ given by $g(x,n)=(gx,n)$ for $g\in \Gamma$ and $(x,n)\in X$. Since $\Sigma_{\epsilon}$ is highly core-free in $H$, it is also highly core-free w.r.t. $H\curvearrowright X$. Hence, we can apply Lemma \ref{HNNHT} to the action $H\curvearrowright X$ and it suffices to show that the set $O=\{w\in Z\,:\,\pi_w\,\,\text{is faithful}\}$ is a dense $G_{\delta}$ in $Z$. Writing $O=\cap_{g\in\Gamma\setminus\{1\}}O_g$, where the set $O_g=\{w\in Z\,:\,\pi_w(g)\neq\id\}$ is obviously open, it suffices to show that $O_g$ is dense. We are going to prove directly that $O$ itself is dense. Write $X=\cup^{\uparrow}X_n$ where $X_n=\{(x,k)\in X\,:\,k\leq n\}$ is infinite and globally invariant under $\Gamma$. Let $w\in Z$ and $F\subset X$ a finite subset. Let $N\in\N$ large enough such that $\Sigma F\cup w(\Sigma F)\subset X_N$. Since $\Sigma_{\epsilon}$ has infinite index in $\Gamma$, the set $X_N\setminus\Sigma F$ (resp. $X_N\setminus w(\Sigma F)$) has infinitely many $\Sigma$-orbits (resp. $\theta(\Sigma)$-orbits) and is globally invariant under $\Sigma$ (resp. $\theta(\Sigma)$). Hence, there exists a bijection $\gamma_0\,:\,X_N\setminus\Sigma F\rightarrow X_N\setminus w(\Sigma F)$ satisfying $\gamma_0\sigma=\theta(\sigma)\gamma_0$ for all $\sigma\in\Sigma$. Define $\gamma\in S(X)$ by $\gamma|_{\Sigma F}=w|_{\Sigma F}$, $\gamma|_{X_N\setminus\Sigma F}=\gamma_0$ and $\gamma|_{X_N^c}=t|_{X_N^c}$. By construction, $\gamma\in Z$ and $\gamma|_F=w|_F$. Moreover, since $\pi_{\gamma}(g)(x,n)=(gx,n)$ for all $n>N$ and since $\Gamma\curvearrowright X$ is faithful, it follows that $\pi_{\gamma}$ is faithful.

\vspace{0.2cm}

\noindent If $H$ is amenable and $\Sigma$ is finite, let $(D_n)$ be a F{\o}lner sequence in $H$ such that $|D_n|\rightarrow\infty$. Then $C_n=D_n\times\{0\}$ is a F{\o}lner sequence for $H\curvearrowright X$ such that $|C_n|\rightarrow\infty$. We may apply \cite[Lemma 3.2]{Fi12} to conclude.
\end{proof}

\section{The case of an amalgamated free product}

\noindent Let $\Gamma_1$, $\Gamma_2$ be two infinite countable groups and $\Sigma$ a common subgroup of $\Gamma_1$, $\Gamma_2$.  Let $\Gamma=\Gamma_1*_{\Sigma}\Gamma_2$ be the amalgamated free product. Suppose that, for $i=1,2$, there is an action $\Gamma_i\curvearrowright X$ on an infinite countable set $X$ such that the two actions of $\Sigma$ on $X$ are free.

\vspace{0.2cm}

\noindent Observe that if the two actions of $\Sigma$ on $X$ have infinitely many orbits (which is automatic when $\Sigma$ is h.c.f. w.r.t. $\Gamma_i\curvearrowright X$ for $i=1,2$) then we may suppose, up to conjugating the action of $\Gamma_2$ by an element in $S(X)$, that the two actions of $\Sigma$ on $X$ coincide. Hence we do suppose that these two actions actually coincide.

\vspace{0.2cm}

\noindent As before, although the actions $\Gamma_i\curvearrowright X$ are not supposed to be faithful, we use the same symbol $g\in S(X)$ to denote the action of the element $g\in\Gamma_i$ on the set $X$ for $i=1,2$. Define
$$Z=\{w\in S(X)\,:\,w\sigma=\sigma w\,\,\forall\sigma\in\Sigma\}.$$
It is easy to check that $Z$ is a closed subgroup of $S(X)$.

\vspace{0.2cm}

\noindent For all $w\in Z$ there exists a unique group homomorphism $\pi_w\,:\,\Gamma\rightarrow S(X)$ such that $\pi_w(g)=g$ and $\pi_w(h)=w^{-1}hw$ for all $g\in\Gamma_1$, $h\in \Gamma_2$.

\begin{lemma}\label{FPHT}
If for $i=1,2$, $\Sigma$ is highly core-free w.r.t. $\Gamma_i\curvearrowright X$, then the set\\
$O=\{w\in Z\,:\,\pi_w\,\,\text{is highly transitive}\}$ is a dense $G_{\delta}$ in $Z$.
\end{lemma}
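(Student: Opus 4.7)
The strategy mirrors that of Lemma \ref{HNNHT}. Writing $O = \bigcap_{n\geq 2}\bigcap_{\bar x,\bar y\in X^{(n)}} O_{\bar x,\bar y}$ with $O_{\bar x,\bar y}:=\{w\in Z:\exists g\in\Gamma,\,\pi_w(g)x_k=y_k\,\forall k\}$, each $O_{\bar x,\bar y}$ is open by the routine continuity of $w\mapsto\pi_w(g)(x)$ for fixed $g\in\Gamma$ and $x\in X$. The point is to show each $O_{\bar x,\bar y}$ is dense. Fix $w\in Z$, a finite $F\subset X$ and $\bar x,\bar y\in X^{(n)}$; since enlarging $F$ only strengthens the density claim, I assume $\{y_1,\ldots,y_n\}\subset F$. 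I will look for $\gamma$ together with $g=g_2g_1$, where $g_1\in\Gamma_1$ and $g_2\in\Gamma_2$. Since $\pi_\gamma(g_1)=g_1$, $\pi_\gamma(g_2)=\gamma^{-1}g_2\gamma$ and $y_k\in F$ forces $\gamma(y_k)=w(y_k)$, the requirement $\pi_\gamma(g_2g_1)x_k=y_k$ boils down to the single equation
\[
\gamma(g_1 x_k)=g_2^{-1}wy_k\qquad\text{for each }k.
\]

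The data $g_1,g_2,\gamma$ are produced in three sub-steps. First, applying the highly core-freeness of $\Sigma$ w.r.t.\ $\Gamma_2\curvearrowright X$ to the tuple $(wy_1,\ldots,wy_n)\in X^{(n)}$ and the finite set $wF$ yields $h\in\Gamma_2$ with $hwy_k\notin\Sigma wF$ and pairwise disjoint orbits $\Sigma hwy_k$. I set $g_2:=h^{-1}$ and $z_k:=w^{-1}hwy_k$, so $wz_k=g_2^{-1}wy_k$, and using $w\in Z$ the orbits $\Sigma z_k$ are pairwise disjoint and disjoint from $\Sigma F$. Second, applying the highly core-freeness of $\Sigma$ w.r.t.\ $\Gamma_1\curvearrowright X$ to $(x_1,\ldots,x_n)$ and to the enlarged finite set $F'=F\cup\{z_1,\ldots,z_n\}$ yields $g_1\in\Gamma_1$ such that $u_k:=g_1 x_k\notin\Sigma F'$ with pairwise disjoint orbits $\Sigma u_k$; consequently $\Sigma F$, the $\Sigma u_k$ and the $\Sigma z_k$ form $2n+1$ pairwise disjoint $\Sigma$-invariant subsets of $X$. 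Third, I set $Y:=\Sigma F\sqcup\bigsqcup_k\Sigma u_k\sqcup\bigsqcup_k\Sigma z_k$ and define $\gamma\in S(X)$ by $\gamma|_{Y^c}=w|_{Y^c}$, $\gamma|_{\Sigma F}=w|_{\Sigma F}$ and the swap
\[
\gamma(\sigma u_k)=\sigma wz_k,\qquad \gamma(\sigma z_k)=\sigma wu_k \qquad(\sigma\in\Sigma,\ 1\leq k\leq n).
\]
This pairing between $\Sigma u_k$ and $\Sigma z_k$ is the analogue of the HNN pair-swap: it makes $\gamma(Y)=w(Y)$ orbit-by-orbit, which is exactly what lets $\gamma|_{Y^c}=w|_{Y^c}$ glue into a bijection of $X$.

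The verifications that conclude the argument are short: $\gamma\in Z$ by the $\Sigma$-equivariance of each piece of the definition; $\gamma|_F=w|_F$ since $F\subset\Sigma F\subset Y$; and $\pi_\gamma(g_2g_1)x_k=\gamma^{-1}g_2\gamma(u_k)=\gamma^{-1}g_2(wz_k)=\gamma^{-1}(wy_k)=y_k$, where the last equality uses $\gamma(y_k)=w(y_k)$. The main obstacle, as in Lemma \ref{HNNHT}, is the simultaneous arrangement of all pairwise disjointness conditions so that both $Y$ is a genuine disjoint union \emph{and} $\gamma(Y)=w(Y)$, allowing $\gamma=w$ on $Y^c$ to extend consistently. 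This is what forces the particular ordering of the two applications of highly core-freeness (producing $g_2$, and hence the $z_k$, first; then $g_1$ with the $z_k$ adjoined to the finite set) together with the specific symmetric ``$u_k\leftrightarrow z_k$'' swap in the last step.
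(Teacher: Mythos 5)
Your proof is correct. It runs inside the same framework as the paper's argument (same decomposition $O=\bigcap_{n\geq 2}\bigcap_{\bar x,\bar y}O_{\bar x,\bar y}$, same use of high core-freeness with respect to the actions, same surgery on $w$ along finitely many free $\Sigma$-orbits), but your density step is organized differently and is in fact leaner. By enlarging $F$ to contain $y_1,\dots,y_n$ -- harmless, since agreeing with $w$ on a larger finite set is a stronger requirement -- you pin $\gamma(y_k)=w(y_k)$ and can realize the transition with $g=g_2g_1$, one element from each factor; this needs only two applications of the highly core-free hypothesis (one for $\Gamma_2$ applied to $(wy_1,\dots,wy_n)$, one for $\Gamma_1$ applied to $(x_1,\dots,x_n)$ with the $z_k$ adjoined to the finite set) and a symmetric two-block swap $\Sigma u_k\leftrightarrow\Sigma z_k$. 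The paper instead takes $g=g_2hg_1$ with $g_1,g_2\in\Gamma_1$, $h\in\Gamma_2$, uses the hypothesis three times plus the fact that $\Sigma\curvearrowright X$ has infinitely many orbits to manufacture auxiliary classes $\Sigma z_k$, and performs a four-block cyclic rearrangement of $\Sigma$-orbits. What your version buys is a shorter construction and verification (the $z_k$ come for free as $w^{-1}hwy_k$, and $\gamma(Y)=w(Y)$ is immediate); what the paper's version buys is that it perturbs $w$ only on the originally given $F$ and treats arbitrary $\bar y$ without the bookkeeping device of absorbing the $y_k$ into $F$. Your supporting checks are all in place: freeness of $\Sigma\curvearrowright X$ gives well-definedness of the swap, $w\in Z$ gives $\Sigma z_k=w^{-1}\Sigma hwy_k$ and the disjointness transfers, $\Sigma$-invariance of all blocks gives $\gamma\in Z$, and the final computation $\gamma^{-1}g_2\gamma g_1x_k=y_k$ is right.
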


\begin{proof}
Write $O=\cap_{n\geq 2}\cap_{\bar{x},\bar{y}\in X^{(n)}} O_{\bar{x},\bar{y}}$ where, for $\bar{x}=(x_1,\ldots x_n),\bar{y}=(y_1,\ldots, y_n)\in X^{(n)}$,
$$O_{\bar{x},\bar{y}}=\{w\in Z\,:\,\exists g\in\Gamma,\,\,\pi_w(g)x_k=y_k\,\,\forall 1\leq k\leq n\}.$$
Since $O_{\bar{x},\bar{y}}$ is obviously open in $Z$, it suffices to show that it is dense in $Z$. Let $w\in Z$ and $F\subset X$ be a finite subset. We first prove the following claim.

\vspace{0.2cm}

\noindent \textbf{Claim.} There exists $g_1,g_2\in\Gamma_1$, $h\in\Gamma_2$ and $z_1,\ldots,z_n\in X$ such that
\begin{itemize}
\item For all $1\leq k\leq n$, $\Sigma g_1 x_k\cap F=\emptyset$, $\Sigma g_2^{-1} y_k\cap F=\emptyset$, $\Sigma w^{-1}z_k\cap F=\emptyset$, $\Sigma w^{-1}hz_k\cap F=\emptyset$.
\item The sets $\Sigma g_1 x_k$, $\Sigma g_2^{-1} y_k$, $\Sigma w^{-1}z_k$ and $\Sigma w^{-1}hz_k$ are pairwise disjoint for $1\leq k\leq n$.
\end{itemize}

\textit{Proof of the claim.} Since $\Sigma$ is highly core-free w.r.t. $\Gamma_1 \curvearrowright X$, the set
$$E_{F,\bar{x}}=\{h\in \Gamma_1\,:\,hx_i\notin \Sigma F\,\,\forall i \textrm{ and } \Sigma hx_i\cap \Sigma h x_j=\emptyset\,\, \forall i\neq j\}$$
is not empty. Take $g_1\in E_{F,\bar{x}}$. Let $F':=F\cup\{g_1x_i\,:\,1\leq i\leq n\}$. Since the set
$$E_{F',\bar{y}}=\{h\in \Gamma_1\,:\,hy_i\notin \Sigma F'\,\,\forall i \textrm{ and } \Sigma hy_i\cap \Sigma h y_j=\emptyset\,\, \forall i\neq j\}$$
is not empty, we can take $g_2^{-1}\in E_{F',\bar{y}}$.
Let
$$
Y=w\big(\Sigma F\cup (\cup_{i=1}^n\Sigma g_1 x_i)\cup (\cup_{i=1}^n\Sigma g_2^{-1}y_i)\big).
$$
Since $\Sigma\curvearrowright X$ has infinitely many orbits, we can find $n$ distinct $\Sigma$-classes $\Sigma z_1,\ldots,\Sigma z_n$ in the complement of $Y$.
Let $F'':=\cup_{i=1}^n \Sigma z_i \cup Y$. Since $\Sigma$ is highly core-free w.r.t. $\Gamma_2 \curvearrowright X$, the set
$$E_{F'',\bar{z}}=\{h\in \Gamma_2\,:\,hz_i\notin \Sigma F''\,\,\forall i \textrm{ and } \Sigma hz_i\cap \Sigma h z_j=\emptyset\,\, \forall i\neq j\}
$$
is not empty. So there is $h\in E_{F'',\bar{z}}$ and this proves the claim.

\vspace{0.2cm}

\noindent \textit{End of the proof of the lemma.} Define $Y'=\bigsqcup_{k=1}^n\Sigma g_1x_k\sqcup\Sigma g_2^{-1}y_k\sqcup\Sigma w^{-1} z_k\sqcup\Sigma w^{-1}hz_k$. One has $F\subset Y'^c$ and $w(Y')=\bigsqcup_{k=1}^n\Sigma wg_1 x_k\sqcup\Sigma wg_2^{-1}y_k\sqcup\Sigma z_k\sqcup\Sigma hz_k$. Define $\gamma\in S(X)$ by $\gamma|_{Y'^c}=w|_{Y'^c}$ and, for all $\sigma\in\Sigma$ and all $1\leq k\leq n$,
$$\gamma(\sigma g_1x_k)=\sigma z_k,\,\,\,\gamma(\sigma g_2^{-1}y_k)=\sigma hz_k,\,\,\,\gamma(\sigma w^{-1}z_k)=\sigma wg_1 x_k,\,\,\,\gamma(\sigma w^{-1}hz_k)=\sigma wg_2^{-1} y_k.$$
By construction, $\gamma\in Z$ and $\gamma|_F=w|_F$. Moreover, with $g=g_2hg_1\in\Gamma$, one has, for all $1\leq k\leq n$,
$$\pi_{\gamma}(g)x_k=g_2\gamma^{-1}h\gamma g_1x_k=g_2\gamma^{-1}hz_k=g_2g_2^{-1}y_k=y_k.$$
\end{proof}

\begin{theorem}\label{ThmFree}
Suppose that $\Gamma_1,\Gamma_2$ are infinite countable groups with a common subgroup $\Sigma$. If $\Sigma$ is highly core-free in both $\Gamma_1$ and $\Gamma_2$, then $\Gamma$ admits a faithful and highly transitive action on an infinite countable set. If moreover $\Gamma_1, \Gamma_2$ are amenable and $\Sigma$ is finite then the action can be chosen to be amenable.
\end{theorem}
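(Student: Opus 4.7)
The plan is to mirror the proof of Theorem \ref{ThmHNN}, using Lemma \ref{FPHT} in place of Lemma \ref{HNNHT}. I will take $X:=\Gamma\times\N$ with the free left-multiplication $\Gamma$-action $g(x,n)=(gx,n)$, restricted to free actions $\Gamma_i\curvearrowright X$; the two resulting $\Sigma$-actions then automatically coincide, matching the setup of Section~3. Since $\Sigma$ is highly core-free in each $\Gamma_i$ and the restricted actions are free, the characterization lemma of Section~1 shows that $\Sigma$ is highly core-free with respect to each $\Gamma_i\curvearrowright X$. Consequently Lemma \ref{FPHT} applies, so $O_{ht}:=\{w\in Z:\pi_w\text{ is highly transitive}\}$ is a dense $G_\delta$ in the Polish (hence Baire) space $Z$. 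It will therefore suffice to show that $O_f:=\{w\in Z:\pi_w\text{ is faithful}\}$ is also a dense $G_\delta$, after which any $w\in O_{ht}\cap O_f$ gives the desired action.

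To establish density of $O_f=\bigcap_{g\in\Gamma\setminus\{1\}}O_g$ (with the obviously open $O_g:=\{w\in Z:\pi_w(g)\neq\id\}$), I would proceed as follows. Exhaust $X=\cup^{\uparrow}X_N$ by the globally $\Gamma$-invariant infinite sets $X_N:=\{(x,k)\in X:k\leq N\}$. Given $w\in Z$ and a finite $F\subset X$, choose $N$ so large that $\Sigma F\cup w(\Sigma F)\subset X_N$. Since $\Sigma$ is highly core-free in $\Gamma_2$, in particular it has infinite index, so both $X_N\setminus\Sigma F$ and $X_N\setminus w(\Sigma F)$ carry free $\Sigma$-actions with infinitely many orbits. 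Pick a $\Sigma$-equivariant bijection $\gamma_0:X_N\setminus\Sigma F\to X_N\setminus w(\Sigma F)$ and define $\gamma\in S(X)$ by
\[
\gamma|_{\Sigma F}=w|_{\Sigma F},\qquad \gamma|_{X_N\setminus\Sigma F}=\gamma_0,\qquad \gamma|_{X\setminus X_N}=\id.
\]
Each of the three pieces is $\Sigma$-invariant and $\gamma$ commutes with $\Sigma$ on each of them, so $\gamma\in Z$ and $\gamma|_F=w|_F$.

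Faithfulness of $\pi_\gamma$ will be forced by the fact that $X\setminus X_N$ is $\Gamma$-invariant and $\gamma$ is the identity on it. For $g\in\Gamma_1$ this gives $\pi_\gamma(g)|_{X\setminus X_N}=g|_{X\setminus X_N}$, and for $h\in\Gamma_2$, $\pi_\gamma(h)|_{X\setminus X_N}=(\gamma^{-1}h\gamma)|_{X\setminus X_N}=h|_{X\setminus X_N}$. By the universal property of the amalgam, $\pi_\gamma$ restricted to $X\setminus X_N$ reproduces the original $\Gamma$-action there, which is free; hence $\pi_\gamma$ is faithful, $\gamma\in O_f$, and density follows.

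For the amenability statement, assume $\Gamma_1,\Gamma_2$ amenable with $\Sigma$ finite. Pick F{\o}lner sequences $(D_n^{(i)})$ in $\Gamma_i$ with $|D_n^{(i)}|\to\infty$; then $C_n^{(i)}:=D_n^{(i)}\times\{0\}$ is a F{\o}lner sequence for $\Gamma_i\curvearrowright X$ with $|C_n^{(i)}|\to\infty$, and I would conclude by applying \cite[Lemma 3.2]{Fi12} (or its amalgamated-free-product version) to the generic $w$ above. The main obstacle is arranging the three simultaneous requirements on $\gamma$ — membership in $Z$, agreement with $w$ on $F$, and faithfulness of $\pi_\gamma$ — but freezing $\gamma$ to the identity on the cofinite $\Gamma$-invariant tail $X\setminus X_N$ handles all three at once, just as in the HNN case.
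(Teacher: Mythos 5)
Your proposal is correct and follows essentially the same route as the paper: the free action on $X=\Gamma\times\N$, Lemma \ref{FPHT} for high transitivity, and the same perturbation $\gamma$ (equal to $w$ on $\Sigma F$, a $\Sigma$-commuting bijection on $X_N\setminus\Sigma F$, and the identity on the $\Gamma$-invariant tail) to get density of the faithful locus. The only cosmetic differences are that the paper invokes \cite[Lemma 4.2]{Fi12} (the amalgam version, with F{\o}lner sets of equal cardinality $|C'_n|=|D'_n|$) rather than Lemma 3.2, exactly the variant you anticipated.
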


\begin{proof}
Consider the free action $\Gamma\curvearrowright X$ with $X=\Gamma\times\N$ given by $g(x,n)=(gx,n)$ for $g\in \Gamma$ and $(x,n)\in X$. Since $\Sigma$ is highly core-free in $\Gamma_i$ it is also highly core-free w.r.t. $\Gamma_i\curvearrowright X$ for $i=1,2$. Hence, we can apply Lemma \ref{FPHT} to the actions $\Gamma_i\curvearrowright X$, $i=1,2$, and it suffices to show that the set $O=\{w\in Z\,:\,\pi_w\,\,\text{is faithful}\}$ is a dense $G_{\delta}$ in $Z$. Writing $O=\cap_{g\in\Gamma\setminus\{1\}}O_g$, where $O_g=\{w\in Z\,:\,\pi_w(g)\neq\id\}$ is obviously open, it suffices to show that $O_g$ is dense. Let us prove that $O$ itself is dense. Write $X=\cup^{\uparrow}X_n$ where $X_n=\{(x,k)\in X\,:\,k\leq n\}$ is infinite and globally invariant under $\Gamma$. Let $w\in Z$ and $F\subset X$ a finite subset. Let $N\in\N$ large enough such that $\Sigma F\cup w(\Sigma F)\subset X_N$. Since $\Sigma$ has infinite index in $\Gamma$, the sets $X_N\setminus\Sigma F$ and $X_N\setminus w(\Sigma F)$ have infinitely many $\Sigma$-orbits and are globally invariant under $\Sigma$. Hence, there exists a bijection $\gamma_0\,:\,X_N\setminus\Sigma F\rightarrow X_N\setminus w(\Sigma F)$ satisfying $\gamma_0\sigma=\sigma\gamma_0$ for all $\sigma\in\Sigma$. Define $\gamma\in S(X)$ by $\gamma|_{\Sigma F}=w|_{\Sigma F}$, $\gamma|_{X_N\setminus\Sigma F}=\gamma_0$ and $\gamma|_{X_N^c}=\id|_{X_N^c}$. By construction, $\gamma\in Z$ and $\gamma|_F=w|_F$. Moreover, since $\pi_{\gamma}(g)(x,n)=(gx,n)$ for all $n>N$ and since $\Gamma\curvearrowright X$ is faithful, it follows that $\pi_{\gamma}$ is faithful.

\vspace{0.2cm}

\noindent If $\Gamma_i$ is amenable for $i=1,2$ then there exists F{\o}lner sequences $(C'_n)$ in $\Gamma_1$ and $(D'_n)$ in $\Gamma_2$ such that  $|C'_n|=|D'_n|\rightarrow\infty$. Since $\Sigma$ is finite, we may apply \cite[Lemma 4.2]{Fi12} to the F{\o}lner sequences $C_n=C'_n\times\{0\}$ and $D_n=D'_n\times\{0\}$ for $\Gamma_1\curvearrowright X$ and $\Gamma_2\curvearrowright X$ respectively to conclude the proof.
\end{proof}

\section{Groups acting on trees}

\noindent In this section we prove Theorem A.

\vspace{0.2cm}

\noindent Let $\Gamma$ be a group acting without inversion on a non-trivial tree. By \cite{Se83}, the quotient graph $\mathcal{G}$ can be equipped with the structure of a graph of groups $(\mathcal{G},\{\Gamma_p\}_{p\in\VG},\{\Sigma_e\}_{e\in\EG})$ where each $\Sigma_e=\Sigma_{\overline{e}}$ is isomorphic to an edge stabilizer and each $\Gamma_p$ is isomorphic to a vertex stabilizer and such that $\Gamma$ is isomorphic to the fundamental group $\pi_1(\Gamma, \mathcal{G})$ of this graph of groups i.e., given a fixed maximal subtree $\mathcal{T}\subset\mathcal{G}$, the group $\Gamma$ is generated by the groups $\Gamma_p$ for $p\in\VG$ and the edges $e\in\EG$ with the relations
$$\overline{e}=e^{-1},\quad s_{e}(x)=er_{e}(x)e^{-1}\,\,,\,\forall x\in\Sigma_e\quad\text{and}\quad e=1\,\,\,\,\forall e\in {\rm E}(\mathcal{T}),$$
where $s_e\,:\,\Sigma_e\rightarrow \Gamma_{s(e)}$ and $r_e=s_{\overline{e}}\,:\,\Sigma_e\rightarrow\Gamma_{r(e)}$ are respectively the source and range group monomomorphisms. Using Lemma \ref{hcf} one sees that Theorem A is a straightforward Corollary of the following result.

\begin{theorem}\label{Main}
If $\Gamma_p$ is infinite for all $p\in\VG$ and $s_e(\Sigma_e)$ is highly core-free in  $\Gamma_{s(e)}$ for all $e\in\EG$ then $\Gamma$ admits a faithful and highly transitive action on an infinite countable set.
\end{theorem}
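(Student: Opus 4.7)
The plan is to invoke Bass--Serre theory to present $\Gamma$ either as an HNN-extension or as an amalgamated free product, and then to reprise the proofs of Theorem \ref{ThmHNN} and Theorem \ref{ThmFree}. A subtlety is that the hypotheses of those theorems require highly core-freeness in the whole base group, which we do \emph{not} have: we only have highly core-freeness in a vertex group. To circumvent this I work with Lemma \ref{HNNHT} and Lemma \ref{FPHT} directly, since they only ask for highly core-freeness with respect to a specified action --- a condition I can check on the vertex group and transfer to the base group just by enlarging the ambient group.

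First, I fix a maximal subtree $\mathcal{T}\subset\mathcal{G}$. If there is an edge $e_0\in\rE(\mathcal{G})\setminus\rE(\mathcal{T})$, removing the pair $\{e_0,\overline{e}_0\}$ leaves a connected subgraph of groups whose fundamental group I call $H$, and standard Bass--Serre theory yields $\Gamma=\HNN(H,\Sigma_0,\theta)$ with $\Sigma_0=s_{e_0}(\Sigma_{e_0})\subset\Gamma_{s(e_0)}\subset H$ and $\theta(\Sigma_0)=r_{e_0}(\Sigma_{e_0})\subset\Gamma_{r(e_0)}\subset H$. Otherwise $\mathcal{G}=\mathcal{T}$ is a tree; I pick any edge $e_0$, cut $\mathcal{T}$ along $\{e_0,\overline{e}_0\}$ into subtrees with fundamental groups $H_1,H_2$, and write $\Gamma=H_1*_{\Sigma_{e_0}}H_2$, the two embeddings of $\Sigma_{e_0}$ being $s_{e_0}:\Sigma_{e_0}\hookrightarrow\Gamma_{s(e_0)}\subset H_1$ and $r_{e_0}:\Sigma_{e_0}\hookrightarrow\Gamma_{r(e_0)}\subset H_2$.

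Next I take $X=\Gamma\times\N$ with its free $\Gamma$-action $g(x,n)=(gx,n)$; its restrictions to $H$ (respectively to $H_1$ and $H_2$) are free as well. The hypothesis says $s_{e_0}(\Sigma_{e_0})$ is highly core-free in $\Gamma_{s(e_0)}$, so by the equivalence $(1)\Leftrightarrow(4)$ of the characterization lemma for highly core-free subgroups in Section~1 it is highly core-free with respect to the free action $\Gamma_{s(e_0)}\curvearrowright X$. Since the set $E_{F,\bar x}$ only grows as one enlarges the acting group, the same subgroup is automatically highly core-free with respect to the restricted action of $H$ (respectively $H_1$) on $X$; the analogous statement holds for $r_{e_0}(\Sigma_{e_0})$. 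Therefore Lemma \ref{HNNHT} (case (i)) or Lemma \ref{FPHT} (case (ii)) applies and produces a dense $G_\delta$ set of $w\in Z$ for which $\pi_w$ is highly transitive.

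To upgrade to faithfulness I copy almost verbatim the final paragraph of the proof of Theorem \ref{ThmHNN} (respectively Theorem \ref{ThmFree}): using the $\Gamma$-invariant exhaustion $X_n=\Gamma\times\{0,\dots,n\}$, I modify $w$ outside $X_N$ so that it coincides there with the free $\Gamma$-action of the HNN-generator $e_0^{-1}$ (respectively with the identity). This keeps $\gamma$ inside $Z$ and forces $\pi_\gamma(g)|_{X_N^c}$ to agree with the free action of $g$ on $X_N^c$ for every $g\in\Gamma$; the latter is injective, so $\pi_\gamma$ is faithful. A Baire intersection then delivers a $w\in Z$ that is simultaneously highly transitive and faithful. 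The only genuinely non-routine step is the third paragraph above: this is exactly why I must use Lemma \ref{HNNHT}/Lemma \ref{FPHT} rather than Theorem \ref{ThmHNN}/Theorem \ref{ThmFree}, since highly core-freeness in the abstract group-theoretic sense is not in general inherited by supergroups, whereas the action-relative version is inherited automatically.
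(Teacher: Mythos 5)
Your argument is correct and follows the paper's own strategy: a Bass--Serre reduction of $\Gamma$ to an HNN-extension over $\Sigma_{e_0}$ or an amalgam $H_1*_{\Sigma_{e_0}}H_2$, followed by the genericity machinery of Sections 2--3 (choosing $e_0$ via a maximal subtree rather than by testing connectedness of $\mathcal{G}$ minus a geometric edge, as the paper does, is an immaterial difference). Where you genuinely diverge is in transferring the hypothesis from the vertex group to the HNN base $H$ (resp.\ the factors $H_1,H_2$): the paper simply notes that $s_{e_0}(\Sigma_{e_0})$ and $r_{e_0}(\Sigma_{e_0})$, being highly core-free in their vertex groups, are highly core-free in $H$ (resp.\ $H_i$), and then quotes Theorems \ref{ThmHNN} and \ref{ThmFree} wholesale, whereas you re-run their proofs through Lemmas \ref{HNNHT} and \ref{FPHT} together with the faithfulness density argument, because you believe abstract highly core-freeness need not pass to overgroups. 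That belief is unfounded in the relevant setting: if $\Sigma\le K\le H$ with $K$ infinite and $\Sigma$ highly core-free in $K$ (the case $\Sigma=\{1\}$ being trivial), apply characterization (4) of the lemma in Section 1 to the free action $K\curvearrowright H$ by left translations; since the defining conditions involve only $\Sigma$-cosets and the sets only grow when the acting group is enlarged, the sets $G_{F,\bar{x}}$ of characterization (3), computed in $H$, are non-empty, so $\Sigma$ is highly core-free in $H$. This is precisely the monotonicity you exploit at the level of actions, so your detour (including redoing the faithfulness paragraph, where your identification of the stable letter as $e_0^{-1}$ is consistent with your choice of $\theta$ from $s_{e_0}(\Sigma_{e_0})$ to $r_{e_0}(\Sigma_{e_0})$) is valid but unnecessary; the paper's shorter route via Theorems \ref{ThmHNN} and \ref{ThmFree} was available to you.
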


\begin{proof}

Let $e_0$ be one edge of $\mathcal{G}$ and $\mathcal{G}'$ be the graph obtained from $\mathcal{G}$ by removing the edges $e_0$ and $\overline{e_0}$.

\vspace{0.2cm}

\noindent \textbf{Case 1: $\mathcal{G}'$ is connected.} It follows from Bass-Serre theory that $\Gamma={\rm HNN}(H,\Sigma,\theta)$ where $H$ is fundamental group of our graph of groups restricted to $\mathcal{G}'$, $\Sigma=r_{e_0}(\Sigma_{e_0})<H$ is a subgroup and $\theta\,:\,\Sigma\rightarrow H$ is given by $\theta=s_{e_0}\circ r_{e_0}^{-1}$. By hypothesis $H$ is infinite and, since $\Sigma<\Gamma_{r(e_0)}$ (resp. $\theta(\Sigma)<\Gamma_{s(e_0)}$) is a highly core-free subgroup, $\Sigma<H$ (resp. $\theta(\Sigma)<H$) is also a highly core-free subgroup. Thus we may apply Theorem \ref{ThmHNN} to conclude that $\Gamma$ admits a faithful and highly transitive action.

\vspace{0.2cm}

\noindent \textbf{Case 2: $\mathcal{G}'$ is not connected.} Let $\mathcal{G}_1$ and $\mathcal{G}_2$ be the two connected components of $\mathcal{G}'$ such that $s(e_0)\in{\rm V}(\mathcal{G}_1)$ and $r(e_0)\in{\rm V}(\mathcal{G}_2)$. Bass-Serre theory implies that $\Gamma=\Gamma_1*_{\Sigma{e_0}}\Gamma_2$, where $\Gamma_i$ is the fundamental group of our graph of groups restricted to $\mathcal{G}_i$, $i=1,2$, and $\Sigma_{e_0}$ is viewed as a highly core-free subgroup of $\Gamma_1$ via the map $s_{e_0}$ and as a highly core-free subgroup of $\Gamma_2$ via the map $r_{e_0}$ since $s_{e_0}(\Sigma_{e_0})$ is highly core-free in $\Gamma_{s(e_0)}$ and $r_{e_0}(\Sigma_{e_0})$ is highly core-free in $\Gamma_{r(e_0)}$ by hypothesis. Since $\Gamma_1$ and $\Gamma_2$ are infinite, we may apply Theorem \ref{ThmFree} to conclude that $\Gamma$ admits a faithful and highly transitive action.
\end{proof}

\section{Links with former results}

\noindent Kitroser \cite{Ki12} proved that surface groups admit a faithful and highly transitive action. We recover this result from Theorem \ref{ThmFree}.

\begin{example}\label{Kitroser}
The fundamental group $\pi_1(\Sigma_g)$ of a closed, orientable surface of genus $g > 1$ admits a faithful and highly transitive action.

\vspace{0.2cm}

\noindent Indeed, as seen in Example \ref{Exhcf}, the subgroup $\Sigma=\langle [a_1,b_1]\rangle $ generated by the commutator $[a_1,b_1]=a_1b_1a_1^{-1}b_1^{-1}$ is highly core-free in $\Gamma_1=\langle a_1,b_1 \rangle\simeq \mathbf{F}_2$. So Theorem \ref{ThmFree} implies that the group $\pi_1(\Sigma_2) = \langle a_1,b_1 \rangle\ast_{\langle c\rangle }\langle a_2,b_2 \rangle$ where $c = [a_1, b_1] = [a_2, b_2]$, admits a faithful and highly transitive action. For $g > 1$, the group $\pi_1(\Sigma_g)$ injects into $\pi_1(\Sigma_2)$ as a subgroup of finite index. Thus by Corollary 1.5 in \cite{MoSt},  the group $\pi_1(\Sigma_g)$ admits a faithful and highly transitive action as well. 
\end{example}

\noindent Chaynikov \cite[Section IV.4]{ChaPhD} proved that non-elementary hyperbolic groups with trivial finite radical admit a faithful and highly transitive action. (In fact, this action has more properties.) Of course, we do not recover this result, since our Theorem \ref{Main} is not applicable to one-ended hyperbolic groups. On the other hand our techniques allow to treat some non-hyperbolic groups. A first example is $\Z^2*\Z^2$, which admits a faithful and highly transitive action by results in \cite{MoSt}. New examples are as follows: let us denote by
$\Z[i]$ is the ring of Gaussian integers (which is isomorphic to $\Z^2$ as a group) and by $\Z[i]^*$ the group of its invertible elements (which is cyclic of order $4$).
\begin{example}
 The following groups admit a faithful and highly transitive action:
 \begin{enumerate}
  \item the group $(\Z[i]^*\ltimes \Z[i])*_{\Z[i]^*}(\Z[i]^*\ltimes \Z[i])$, and
  \item the group ${\rm HNN}(\Z[i]^*\ltimes\Z[i], \Z[i]^*, \theta)$, where $\theta$ maps $\Z[i]^*$ onto one of its conjugates.
 \end{enumerate}
\end{example}

\noindent Indeed, $\Z[i]^*$ is a malnormal subgroup in $\Z[i]^*\ltimes \Z[i]$ and so are all its conjugates (see e.g. \cite[Propositions 1 and 2]{HW11}). Since they are moreover finite, with infinite index, these subgroups satisfy the hypotheses of Lemma \ref{hcf}, so that they are highly core-free. It then suffices to apply Theorem \ref{Main}.

\end{document}